\def\resetMathstrut@{%
  \setbox\z@\hbox{%
    \mathchardef\@tempa\mathcode`\(\relax
    \def\@tempb##1"##2##3{\the\textfont"##3\char"}%
    \expandafter\@tempb\meaning\@tempa \relax
  }%
  \ht\Mathstrutbox@1.2\ht\z@ \dp\Mathstrutbox@1.2\dp\z@
}
\newtheorem{theorem}{Theorem}[section]
\newtheorem{corollary}[theorem]{Corollary}
\newtheorem{definition}[theorem]{Definition}
\newcommand{\R}{\ensuremath{\mathbb{R}}}
\newcommand{\eps}{\varepsilon}
\renewcommand{\epsilon}{\varepsilon}
\renewcommand{\leq}{\leqslant}
\renewcommand{\geq}{\geqslant}
\newcommand{\E}{\mathbb{E}}
  \newcommand{\st}{:\,} % "such that" to define sets
\begin{document}

\title{Towards a Banach Space Chernoff Bound for Markov Chains via Chaining Arguments}
\author[1]{Shravas Rao \footnote{Funding: This material is based upon work supported by the National Science Foundation under Award No. 2348489}}
\affil[1]{Department of Computer Science, Portland State University \protect \\
1900 SW 4th Ave, Portland, OR 97201 \protect \\
shravas@pdx.edu}
\maketitle

\begin{abstract}
Let $\{Y_i\}_{i=1}^{\infty}$ be a stationary reversible Markov chain with state space $[N]$, let $(X, \| \cdot \|)$ be a real-valued Banach space and let $f_1, \ldots, f_n: [N] \rightarrow X$ be functions with mean $0$ such that $\|f_i(v)\| \leq 1$ for all $i$ and $v$.
We prove bounds on the expected value of and deviation bounds for the random variable  $\|f_1(Y_1)+\cdots+f_n(Y_n)\|$.
For large enough $n$ that depends on the Banach space (and not $N$), these bounds behave similarly as known bounds for independent random variables.
When the Banach space in question is the set of matrices equipped with the $\ell_2 \rightarrow \ell_2$ operator norm, for large enough $n$, our bounds on the expected value improve upon known bounds and match what is known for independent random variables up to a factor in the spectral gap.
\end{abstract}

\section{Introduction}

It is well-known via the Chernoff Bound that the sum of independent bounded random scalars concentrates around its mean.
Given $n$ samples, the probability that the sum differs by more than $u \sqrt{n}$ than the expected value can be bounded above by $2 \exp(-C u^2)$ for some constant $C$. The Chernoff bound and its many variations, including the Hoeffding bound, Azuma inequality, Bernstein's inequality, and more, are used throughout theoretical computer science, often in the analysis of random structures and algorithms (see~\cite{BLM13} for more background).

Starting with the work of Rudelson~\cite{R99}, Ahlswede and Winter~\cite{AW06}, and Tropp~\cite{T12}, generalizations of these concentration inequalities were obtained for random matrices, rather than random scalars.
In particular, consider the operator norm of the sum of $n$ independently chosen random $d \times d$ matrices with mean $0$ and bounded norm.
It was shown that the probability that this exceeds $u \sqrt{n}$ was bounded above by $2 d \exp(-C u^2)$ for some constant $C$.
These bounds can be obtained in a variety of ways, including via the Golden-Thompson inequality~\cite{G65, T65} or Lieb's inequality~\cite{L73}.

Finally in work by Naor~\cite{N12}, building upon insights due to Pisier~\cite{P75}, it was shown that concentration occurs for all Banach spaces that satisfy certain properties.
Before stating the bound, we define a quantity related to Banach spaces called the modulus of uniform smoothness.

\begin{definition}
The modulus of uniform smoothness of a Banach space $(X, \| \cdot \|)$ is \[\rho_X(\tau) = \sup\left\{\frac{\|x+\tau y\|+\|x - \tau y\|}{2}-1 : x, y \in X, \|x\| = \|y\| = 1\right\}.\]
\end{definition}

Naor proved the following.

\begin{theorem}[Theorem,~\cite{N12}]\label{thm:naor}
Let $(X, \|\cdot\|)$ be a Banach space so that $\rho_X(\tau) \leq s \tau^2$ for some $s$ and all $\tau > 0$.
Let $f_1, \ldots, f_n: [N] \rightarrow X$ be functions such that $\E[f(Y_i)] = 0$ for all $i$ and $\|f_i(v)\| \leq 1$ for all $i$ and $v \in [N]$.
Let $Y_1, \ldots, Y_n$ be independent over some distribution $\mu$ on $[N]$
Then,
\begin{equation}\label{thm:banachspacech}
\Pr\left[\|f_1(Y_1)+\cdots+f_n(Y_n)\| \geq u \sqrt{n}\right] \leq \exp\left(s+2-{Cu^2}\right)
\end{equation}
for some universal constant $C$.
\end{theorem}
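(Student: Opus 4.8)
The plan is to reduce the statement to a one-dimensional concentration fact about the scalar random variable $S_n\eqdef\|f_1(Y_1)+\dots+f_n(Y_n)\|$, controlled in two stages: first bound $\E[S_n]$, then show $S_n$ concentrates sharply about that mean. Write $M_k=\sum_{i=1}^k f_i(Y_i)$ and $\mathcal F_k=\sigma(Y_1,\dots,Y_k)$; since the $Y_i$ are independent and each $f_i$ has mean $0$, the process $(M_k)_{k=0}^n$ is an $X$-valued martingale with $M_0=0$ and increments $d_k=f_k(Y_k)$ obeying $\E[d_k\mid\mathcal F_{k-1}]=0$ and $\|d_k\|\le 1$. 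The crucial structural point is that $S_n=\|M_n\|$ is an ordinary real-valued function of $(Y_1,\dots,Y_n)$, so the second stage will be elementary and dimension-free; the Banach geometry will enter only through the first stage.

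\emph{Stage 1: $\E[S_n]\le C_0\sqrt{s\,n}$ for a universal $C_0$.} The mechanism is Pisier's theorem that power-type-$2$ uniform smoothness is equivalent to martingale type $2$: the hypothesis $\rho_X(\tau)\le s\tau^2$ furnishes an equivalent norm on $X$, with distortion controlled by $s$, satisfying a parallelogram-type inequality $\|x+y\|^2+\|x-y\|^2\le 2\|x\|^2+C s\,\|y\|^2$ for all $x,y$ with $C$ universal and the dependence on $s$ \emph{linear}. Given such an inequality, a short symmetrization---replace the mean-zero increment $Y=d_k$ by $Y-Y'$ for an independent copy $Y'$, legitimate by convexity of $\|\cdot\|^2$ and $\E Y'=0$---upgrades it to $\E\|x+d_k\|^2\le\|x\|^2+C's\,\E\|d_k\|^2$ for $x$ fixed; applying this conditionally with $x=M_{k-1}$ and telescoping gives $\E[S_n^2]=\E\|M_n\|^2\le C's\sum_k\E\|d_k\|^2\le C's\,n$, whence $\E[S_n]\le\sqrt{C's\,n}$ by Jensen. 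This is the only place the smoothness hypothesis is used, and it is exactly what rules out spaces such as $\ell_\infty^N$, whose modulus of smoothness is not $O(\tau^2)$.

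\emph{Stage 2: fluctuations, and conclusion.} View $S_n=\Phi(Y_1,\dots,Y_n)$ with $\Phi(y_1,\dots,y_n)=\|\sum_i f_i(y_i)\|$. Changing a single coordinate $y_i$ changes $\Phi$ by at most $\|f_i(y_i)-f_i(y_i')\|\le\|f_i(y_i)\|+\|f_i(y_i')\|\le 2$, so $\Phi$ has the bounded-differences property with all constants $2$; since the $Y_i$ are independent, McDiarmid's inequality (see~\cite{BLM13}) gives $\Pr[S_n\ge\E S_n+t]\le\exp(-t^2/2n)$ for all $t\ge 0$, i.e.\ $S_n$ is sub-Gaussian about its mean with variance proxy $n$, uniformly in $N$ and in $X$. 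For the conclusion: if $u\sqrt n\le\E S_n$ then $u^2\le C's$ by Stage 1, so $\exp(s+2-cu^2)\ge 1$ for suitably small universal $c$ and the bound is trivial; otherwise set $t=u\sqrt n-\E S_n>0$, so that
\[
\Pr\bigl[S_n\ge u\sqrt n\bigr]\le\exp\bigl(-(u\sqrt n-\E S_n)^2/2n\bigr)\le\exp\bigl(-(u-\sqrt{C's})^2/2\bigr),
\]
and the elementary estimate $(u-b)^2\ge(1-\delta)u^2-(\delta^{-1}-1)b^2$, applied with $\delta=C'/(C'+2)$, turns the right-hand side into $\exp\bigl(s-u^2/(C'+2)\bigr)\le\exp(s+2-cu^2)$ with $c=1/(C'+2)$.

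The main obstacle is the quantitative input to Stage 1: one needs the $L^2$ martingale bound with a constant that is \emph{linear} in $s$, which is Pisier's nontrivial implication and is precisely what makes $s$ appear \emph{additively} in the final exponent. Deriving a parallelogram inequality directly from the definition of $\rho_X$ by homogeneity and the triangle inequality is possible but lossy---it yields a constant of order $s^2$, giving only a bound of the shape $\exp(O(s^2)-cu^2)$---so the sharp statement genuinely rests on the renorming theorem. It is also instructive that one cannot afford to extract the exponential tail from the $X$-valued moments $\|M_n\|_p$ via the $L^p$ form of Pisier's inequality: that route yields $\exp(-cu^2/s)$, with $s$ multiplicative. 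The present argument does better because, after Stage 1 has pinned down the mean, the tail is read off the \emph{scalar} martingale-difference sequence $\|M_k\|-\|M_{k-1}\|\in[-1,1]$ by bounded differences, where no Banach-space information is lost. (Matching the precise constants ``$s+2$'' and the particular universal $c$ is then routine bookkeeping; alternatively one can estimate $\E\exp(\|M_n\|^2/Kn)$ directly along the martingale, at the extra cost of separately handling the rare event that $\|M_k\|$ is as large as order $n$.)
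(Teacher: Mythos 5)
This statement is imported verbatim from Naor~\cite{N12}; the paper offers no proof of it, so there is nothing in-paper to compare against, and I will compare your argument to Naor's original instead. Your two-stage proof is essentially correct and is a genuinely different, more elementary route: Naor (following Pisier) iterates the two-point smoothness inequality at the level of $p$-th moments to get $(\E\|M_n\|^p)^{1/p}\lesssim\sqrt{(p+s)n}$ for all $p\ge 2$, then applies Markov's inequality and optimizes over $p$ (which is what produces the $e^{s+2}$ prefactor, and which works for arbitrary $X$-valued martingales with bounded increments), whereas you exploit the independence of the $Y_i$ to split the problem into a mean bound via martingale type $2$ and a dimension-free fluctuation bound via McDiarmid --- precisely the decomposition the paper itself advertises in its Section~1.2 remark on recovering Naor-type tails in the independent case. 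Two points to tighten. First, do not route Stage~1 through a renorming: an equivalent norm with distortion depending on $s$ would contaminate both the increment bound $\|d_k\|\le 1$ used in Stage~2 and the comparison of the two means, and it is unnecessary, since the inequality $\tfrac{1}{2}(\|x+y\|^2+\|x-y\|^2)\le\|x\|^2+C(s+1)\|y\|^2$ holds in the \emph{original} norm: writing $a=\tfrac{1}{2}(\|x+y\|+\|x-y\|)$ and $b=\tfrac{1}{2}(\|x+y\|-\|x-y\|)$ with $|b|\le\|y\|$, the offending term $s^2\|y\|^4/\|x\|^2$ in $a^2$ is at most $\|y\|^2$ when $\|y\|\le\|x\|/s$, while for $\|x\|/s\le\|y\|\le\|x\|$ the crude bound $a\le\|x\|+\|y\|$ together with $\|x\|\le s\|y\|$ already gives $a^2\le\|x\|^2+(2s+1)\|y\|^2$, and the case $\|y\|\ge\|x\|$ is trivial; so the linear-in-$s$ constant is available directly. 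Second, your final case split should be on whether $u\le\sqrt{C's}$ rather than on whether $u\sqrt{n}\le\E S_n$: in the regime $\E S_n<u\sqrt{n}$ but $u<\sqrt{C's}$ the step $(u\sqrt{n}-\E S_n)^2\ge n(u-\sqrt{C's})^2$ is false (you are squaring a negative lower bound of larger magnitude), but there the target $\exp(s+2-cu^2)\ge 1$ for $c\le 1/C'$ and the claim is vacuous, so the repair is only a relabeling of the cases. With those adjustments the argument is complete and yields the stated bound.
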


From Eq.~\eqref{thm:banachspacech}, one also has that $\E\left[\|f_1(Y_1)+\cdots+f_n(Y_n)\|\right] \leq C\sqrt{n s}$ for some constant $C$.
The modulus of uniform smoothness of $d \times d$ matrices under the operator norm is $O(\log(d))$ (see~\cite{TJ74}), and thus Naor's bound generalizes the matrix Chernoff bound.
\\

A different line of work aimed to generalize Chernoff bounds to the case of dependent random variables, in particular those obtained from a Markov chain with a spectral gap.
The main application was as a tool for derandomization.
Given a graph $G = (V, E)$ one can consider the random walk that starts at a uniformly random vertex, and chooses a random neighbor at each step.
If all vertices of the graph have low degree, this can use significantly less randomness than choosing vertices independently at random.

Gillman showed that if the transition matrix of the random walk had a small second largest value of an eigenvalue, then one can obtain similar concentration as promised by Chernoff bounds for independent random variables~\cite{G98}.
In particular, if $\lambda$ is the second largest eigenvalue of the transition matrix, one obtains the tail bound $2\exp(-C (1-\lambda)u^2)$ for some constant $C$.
Such bounds were refined in a long series of work, including~\cite{L98,LCP04,H08,ChungLLM12,HH15,R19,NRR20,M20,FJS21}.

There exist families of graphs of constant degree such that the second largest eigenvalue, and even second largest absolute value of an eigenvalue, is bounded above by a constant~\cite{LPS88,M88}.
Thus, what is remarkable about Chernoff bounds for Markov chains is that the same bound applies to graphs on an arbitrarily large number of vertices.
Additionally, one can obtain the same concentration while reducing the number of random bits from $n \log(|V|)$ to $\log(|V|)+O(n)$. \\

A natural next step is to obtain Chernoff bounds for vector-valued random variables for Markov chains.
In the case of matrices under the operator norm, such a bound was conjectured by Wigderson and Xiao, who also outlined a number of applications of such a bound~\cite{WX08}.
In particular, it was conjectured that the bound of $2 d \exp(-Cu^2)$ could be replaced by $2 d \exp(-C(1-\lambda)u^2)$.
This conjecture was proven true in work due to Garg, Lee, Song and Srivastava, due to a generalization of the Golden-Thompson inequality~\cite{GLSS18}.
This bound was further refined in~\cite{QWLPT20}

In the case of independent random matrices, one can add an extra $\left\|\E\left[X_i^2\right]\right\|$ factor to the bound, resulting in sharper bounds if this quantity is small.
This, and other variations of matrix Chernoff bounds were generalized to random matrices from a Markov chain in work due to Neeman, Shi and Ward~\cite{NSW24}.

In the case of $\ell_p$ spaces, a vector-valued Chernoff bound for Markov chains was obtained in~\cite{NRR20}.

In this work, we prove the following bound that works for all real-valued Banach spaces, and is independent of the size $N$ of the state space.

\begin{theorem}\label{thm:mainsame}
Let $\{Y_i\}_{i=1}^{\infty}$ be a stationary reversible Markov chain with state space $[N]$, transition matrix $A$ and stationary measure $\mu$ so that $Y_1$ is distributed according to $\mu$.
Let $E_{\mu} = \mathbf{1} \mu^*$ be the averaging operator, and let $\lambda = \left\|A-E_{\mu}\right\|_{L_2(\mu) \rightarrow L_2(\mu)}$ be less than $1$.

Let $g_1, \ldots, g_N$ be Gaussians with mean $0$ and such that $g_v$ has variance $\mu_v$.
\footnote{In particular, the variance of $g_v$ is equal to the probability of $v$ in the stationary distribution.}
For each $g_v$, let $g_v^{(1)}, \ldots, g_v^{(n)}$ be $n$ independent copies of $g_i$.
Let $(X, \| \cdot \|)$ be a $k$-dimensional Banach space over the real numbers, and let $f_1, \ldots, f_n: [N] \rightarrow X$ be functions such that $\E[f_i(Y_i)] = 0$ for all $i$ and $\|f_i(v)\| \leq 1$ for all $i$ and all $v \in [N]$.
Then for some constant $C$,
\[
\E\left[\|f_1(Y_1)+\cdots+f_n(Y_n)\|\right] \leq \frac{C k}{1-\lambda}+\frac{C}{\sqrt{1-\lambda}} \E\left[\left\|\sum_{i=1}^{n} \sum_{v=1}^N g_v^{(i)} f_i(v)\right\|\right] 
\]
\end{theorem}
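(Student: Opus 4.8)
The plan is to reduce the Markov-chain sum to a sum of independent vector-valued Gaussians by a chaining/comparison argument, carried out at the level of the transition operator. First I would decompose the random walk trajectory so that the relevant quantity is controlled by a quadratic form in $A - E_\mu$. Concretely, write $S = \sum_i f_i(Y_i)$ and split it as $S = \E[S \mid \text{something}] + (\text{fluctuation})$; the natural move is to pass to the ``two-sided'' / reversibility picture, using that $A$ is self-adjoint on $L_2(\mu)$ and $\|A - E_\mu\|_{L_2(\mu)\to L_2(\mu)} = \lambda < 1$. The key structural fact I would exploit is that for a stationary reversible chain, the covariance structure of the indicator vectors $(\mathbf{1}[Y_i = v])_{i,v}$ is governed by powers of $A$, and $\sum_{t \ge 0} \lambda^t = \tfrac{1}{1-\lambda}$ is what produces the $\tfrac{1}{1-\lambda}$ and $\tfrac{1}{\sqrt{1-\lambda}}$ factors in the statement.

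The heart of the argument should be a Gaussian comparison (a ``contraction principle'' or Slepian/Chevet-type inequality in Banach space) showing that $\E\|S\|$ is dominated, up to the spectral-gap-dependent constants, by $\E$ of the norm of a Gaussian process whose covariance matches or dominates that of the (centered, symmetrized) walk sum. Here the plan is: (i) symmetrize $S$ using Rademacher variables to get $\E\|S\| \lesssim \E\|\sum_i \epsilon_i f_i(Y_i)\|$ up to a constant and an error term; (ii) apply the standard comparison $\E\|\sum \epsilon_i x_i\| \lesssim \sqrt{\tfrac{\pi}{2}}\,\E\|\sum g_i x_i\|$ replacing Rademachers by Gaussians; (iii) condition on the walk and then ``decouple'' the walk randomness from the Gaussian randomness, at which point the remaining Gaussian vector $\sum_i g_i f_i(Y_i)$ has, conditionally, a covariance operator that is controlled by the stationary covariance $\sum_v \mu_v f_i(v) f_i(v)^*$ — which is exactly the covariance of $\sum_i \sum_v g_v^{(i)} f_i(v)$ — provided we absorb the excess correlation coming from $A \ne E_\mu$ into the spectral-gap factor. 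The additive $\tfrac{Ck}{1-\lambda}$ term should arise either from the ``burn-in''/boundary effects of the chain (the first $\Theta(1/(1-\lambda))$ steps before mixing, each contributing $O(k)$ since any unit vector in a $k$-dimensional space has norm comparable to its Euclidean norm up to a $\sqrt k$ or $k$ factor by John's theorem), or from using a trivial bound $\|f_i(v)\| \le 1$ on a low-dimensional remainder after the comparison.

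The main obstacle I anticipate is step (iii): the conditional covariance of $\sum_i g_i f_i(Y_i)$ depends on the realized trajectory $Y_1, \dots, Y_n$ in a way that is \emph{not} simply the stationary covariance — cross terms between times $i$ and $j$ carry factors $A^{|i-j|} - E_\mu$, and these do not vanish. Making the comparison rigorous requires either (a) a chaining argument over the ``scales'' of the Markov operator, summing geometric series $\sum_t \lambda^t$ to bound the excess Gaussian process by the independent one, or (b) an operator-theoretic trick writing $A = E_\mu + (A - E_\mu)$ and iterating, so that the dependent Gaussian process is expressed as $\sum_{t \ge 0}$ of processes each a $\lambda^t$-scaled copy of something comparable to the independent process, yielding the $\tfrac{1}{\sqrt{1-\lambda}}$ factor after a triangle inequality in the Banach norm and a square-function estimate. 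Getting the powers of $(1-\lambda)$ sharp here — in particular why the multiplicative factor is $(1-\lambda)^{-1/2}$ and not $(1-\lambda)^{-1}$ — is the delicate point, and I expect it requires the reversibility of the chain crucially (to get self-adjointness and hence a clean spectral decomposition of $A - E_\mu$) rather than just a bound on a single eigenvalue.
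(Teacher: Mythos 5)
Your plan diverges from the paper's proof and, as sketched, has two gaps that I do not see how to close. First, the symmetrization in step (i) is not available here: the standard inequality relating $\E\|\sum_i Z_i\|$ to $\E\|\sum_i \epsilon_i Z_i\|$ relies on the coordinates being independent (one swaps $Y_i$ with an independent copy $Y_i'$ one index at a time), and for a Markov chain the joint law is not invariant under such swaps. Second, and more fundamentally, a Slepian/Chevet-type comparison in step (iii) requires the process $x^* \mapsto \langle x^*, S\rangle$ to be Gaussian (or at least sub-Gaussian with the right variance proxy), and a Markov-chain sum is not: the best available scalar control is a Bernstein-type mixed tail, sub-Gaussian at small deviations and only sub-exponential at large ones. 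A pure covariance-comparison argument has no mechanism to absorb the sub-exponential part, and this is exactly where the additive $Ck/(1-\lambda)$ term really comes from --- not from burn-in or boundary effects of the chain, as you speculate.

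The paper's route is different and worth internalizing. It dualizes: writing $\|S\| = \sup_{x^* \in B^*} \langle x^*, S\rangle$ turns the problem into bounding the supremum of the scalar empirical process $W_t = \sum_i t(i, Y_i)$ over the set $T = \{ (\langle x^*, f_i(v)\rangle)_{i,v} : x^* \in B^* \}$. Paulin's Markov-chain Bernstein inequality (Theorem~\ref{thm:ledoux}) gives, for each pair $s,t \in T$, an increment bound of the form $\Pr[|W_s - W_t| \ge u] \le 2\exp(-\min(u^2/d_2(s,t)^2,\, u/d_1(s,t)))$ with $d_1 \sim \|s-t\|_\infty/(1-\lambda)$ and $d_2 \sim \|s-t\|_{L_2(\mu)}/\sqrt{1-\lambda}$; Talagrand's generic chaining theorem for such mixed-tail processes then bounds $\E[\sup_t |W_t|]$ by $\gamma_1(T,d_1) + \gamma_2(T,d_2)$. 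The $\gamma_2$ term is converted back into the independent Gaussian quantity $\E\|\sum_{i,v} g_v^{(i)} f_i(v)\|$ via the majorizing measures theorem (this is where the $1/\sqrt{1-\lambda}$ factor lands, coming from the variance term in Bernstein, so your intuition about the square root is in the right place), and the $\gamma_1(T,\ell_\infty)$ term is bounded by $Ck$ via a volumetric $\eps$-net argument on the $k$-dimensional dual unit ball. If you want to salvage your approach, the missing ingredient is precisely a scalar concentration inequality for the chain together with a chaining theorem that tolerates the sub-exponential tail; the covariance bookkeeping via $\sum_t \lambda^t$ that you describe is implicitly contained in Paulin's inequality rather than something you need to redo by hand.
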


Using the same techniques, we can obtain the following tail bound.

\begin{theorem}\label{cor:mstail}
In the setting of Theorem~\ref{thm:mainsame} and letting
\[
L = \E\left[\left\|\sum_{i=1}^{n} \sum_{v=1}^N g_v^{(i)} f_i(v)\right\|\right] ,
\]
then for some constants $C_1, C_2$
\[
\Pr\left[\|f_1(Y_1)+\cdots+f_n(Y_n)\| \geq u \right] \leq C_1 \exp\left(-C_2 (1-\lambda)\min\left\{\frac{u}{k}, \frac{u^2}{L^2}\right\}\right)
\]
\end{theorem}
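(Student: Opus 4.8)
The plan is to promote the bound on $\E[Z]$ supplied by Theorem~\ref{thm:mainsame} (write $Z := \|f_1(Y_1)+\cdots+f_n(Y_n)\|$) into a bound on \emph{every} moment of $Z$, and then to run the routine optimization that converts a family of moment bounds into a tail bound. The phrase ``the same techniques'' is meant literally: the chaining argument underlying Theorem~\ref{thm:mainsame} is not really an argument about the expectation — carried out with $L^p$-norms in place of expectations at each scale, and Minkowski's inequality in place of the triangle inequality used to recombine the scales, it should give a genuine moment estimate, which is all we need.

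Concretely, Step~1 is to establish, for every real $p \ge 1$, a bound of the shape suggested by Theorem~\ref{thm:mainsame},
\[
\left(\E\!\left[Z^p\right]\right)^{1/p} \;\le\; \frac{C\,k\,p}{1-\lambda} \;+\; \frac{C\sqrt{p}}{\sqrt{1-\lambda}}\;L
\]
for a universal constant $C$. Two ingredients control the two terms. The $k$-dependent term comes from the ``net''/coarse-approximation part of the chain, which the proof handles through scalar deviation estimates for reversible Markov chains; these are Bernstein-type (a sub-Gaussian regime followed by a sub-exponential one), so their $p$-th moments grow linearly in $p$, which is where the factor $p$ comes from — and this is genuine, since the large-deviation behavior of Markov-chain sums really is only sub-exponential. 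The $L$-term comes from the Gaussian comparison: for the Gaussian vector $G := \sum_{i,v} g_v^{(i)} f_i(v)$ in the $k$-dimensional space $X$, Gaussian concentration gives $\left(\E\|G\|^p\right)^{1/p} \le \E\|G\| + C\sqrt{p}\,\sigma(G)$, where $\sigma(G) = \sup_{\|\xi\|_* \le 1}\left(\E\,\xi(G)^2\right)^{1/2}$; since $|\xi(G)| \le \|G\|$ pointwise, $\sigma(G) = \sqrt{\pi/2}\,\sup_{\xi}\E|\xi(G)| \le \sqrt{\pi/2}\,\E\|G\| = \sqrt{\pi/2}\,L$, so the Gaussian contribution is at most a constant times $\sqrt{p}\,L$ with no dependence on $k$ — matching the $k$-free coefficient of $L$ in Theorem~\ref{thm:mainsame}.

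Step~2 is then routine. Put $a = \tfrac{Ck}{1-\lambda}$ and $b = \tfrac{CL}{\sqrt{1-\lambda}}$, so $\left(\E Z^p\right)^{1/p} \le ap + b\sqrt{p}$. Given $u>0$, take $p = \min\!\left\{\tfrac{u}{4ea},\, \tfrac{u^2}{16e^2 b^2}\right\}$; if $p \ge 1$ then $ap + b\sqrt{p} \le u/(2e)$, and Markov's inequality applied to $Z^p$ yields
\[
\Pr[Z \ge u] \;\le\; \left(\frac{ap + b\sqrt{p}}{u}\right)^{p} \;\le\; e^{-p} \;=\; \exp\!\left(-\min\!\left\{\frac{u}{4ea},\, \frac{u^2}{16e^2 b^2}\right\}\right),
\]
and substituting the values of $a,b$ turns the exponent into $-C_2(1-\lambda)\min\{u/k,\,u^2/L^2\}$. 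If instead $p<1$, then $u$ is comparable to $\max\{a,b\}$, which forces $(1-\lambda)\min\{u/k,u^2/L^2\}$ to be at most a universal constant, so the claimed right-hand side exceeds $1$ once $C_1$ is large enough and the bound is trivial there. The main obstacle is therefore entirely in Step~1: checking that the chaining and spectral-gap bookkeeping of Theorem~\ref{thm:mainsame} survives at the level of $L^p$-norms with $p$-independent prefactors, and that the two growth rates stay honest and uncontaminated (the dimension term strictly linear in $p$, the Gaussian term strictly $O(\sqrt{p})$). It is worth noting that the cheap alternative — treating $Z$ as a bounded-differences function of $(Y_1,\dots,Y_n)$ and invoking a McDiarmid-type inequality for Markov chains — only produces a variance proxy of order $n/(1-\lambda)$, i.e.\ a bound $\exp(-c(1-\lambda)u^2/n)$, which is weaker than the stated one whenever $L \ll \sqrt{n}$ and has no sub-exponential regime; this is precisely why the chaining must be redone rather than used as a black box.
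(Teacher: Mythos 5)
Your overall strategy is sound and would yield the stated bound, but it takes a detour the paper does not need. The paper gets the tail bound essentially for free: Talagrand's chaining theorem (Theorem~\ref{thm:talbern}, quoted from Theorem 4.5.13 of~\cite{T21}) already comes with the deviation inequality $\Pr\bigl[\sup_{s,t}|X_s-X_t|\ge C_1u^2\gamma_1(T,d_1)+C_1u\gamma_2(T,d_2)\bigr]\le C_2e^{-u^2}$ alongside the expectation bound, and this is packaged into Corollary~\ref{cor:same}; the proofs of Theorems~\ref{thm:mainsame} and~\ref{cor:mstail} are then literally the same proof, with the bounds $\gamma_1(T,\ell_\infty)\le Ck$ and $\gamma_2(T,L_2(\mathbf{1}\otimes\mu))\le CL$ plugged into the two conclusions of the corollary. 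Your route --- upgrade to the moment bound $(\E Z^p)^{1/p}\le Ckp/(1-\lambda)+C\sqrt{p}\,L/\sqrt{1-\lambda}$ and optimize in $p$ --- is equivalent to that tail bound (each is obtained from the other by integration/Markov, up to constants), and your Step~2 arithmetic, including the $p<1$ edge case, is correct. So you are not wrong, but you have left the entire content of the theorem in an unproven Step~1, when the needed statement is exactly the tail-bound half of a theorem the paper cites as a black box.

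One piece of your justification of Step~1 is misattributed and you should fix it if you carry this out. The $\sqrt{p}\,L$ coefficient does not come from Gaussian concentration of $\|G\|$ where $G=\sum_{i,v}g_v^{(i)}f_i(v)$; the random variable whose moments you are bounding is the Markov-chain sum, not $G$, and $G$ never appears in the chaining itself. The $\sqrt{p}$ growth comes from the sub-Gaussian branch of Paulin's Bernstein inequality (Theorem~\ref{thm:ledoux}) controlling the increments $W_s-W_t$, which produces a term $\sqrt{p}\,\gamma_2(T,d_2)\lesssim \sqrt{p}\,\gamma_2(T,L_2(\mathbf{1}\otimes\mu))/\sqrt{1-\lambda}$; only \emph{after} that does the Gaussian sum enter, via the majorizing measures theorem (Theorem~\ref{thm:mm}), as the device that bounds $\gamma_2(T,L_2(\mathbf{1}\otimes\mu))$ by $CL$. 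Your computation of $\sigma(G)\le\sqrt{\pi/2}\,L$ is true but plays no role. The linear-in-$p$ term from the sub-exponential branch, with $\gamma_1(T,\ell_\infty)\le Ck$ from the volumetric covering of the dual unit ball, is the part you have described correctly.
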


Both bounds follow from a relatively straightforward application of techniques from generic chaining (see~\cite{T21}), and a Markov chain Bernstein inequality for scalar random variables due to Paulin~\cite{P15}.
This is in contrast to techniques used in prior work, which bounded the moment generating function or moments to obtain tail bounds.

To compare the bound in Theorem~\ref{thm:mainsame} with that of Theorem~\ref{thm:naor}, recall that the distribution of the normalized sum of Rademacher random variables approaches that of a standard Gaussian in the limit.
Thus, by Theorem~\ref{thm:naor}, 
\[
\E\left[\left\|\sum_{i=1}^{n} \sum_{v=1}^N g_v^{(i)} f_i(v)\right\|\right] 
\leq
\sqrt{n s},
\]
and when $n \geq O(k^2/(s(1-\lambda)))$, one obtains the same bound as what follows from Theorem~\ref{thm:naor} up to constant factors and the expected $1/\sqrt{1-\lambda}$ factor.
In fact, Theorem~\ref{thm:mainsame} can sometimes yield sharper bounds if one has a sharper bound on $\E\left[\left\|\sum_{i=1}^{n} \sum_{v=1}^N g_v^{(i)} f_i(v)\right\|\right].$

\subsection{Matrix-valued random variables}

We instantiate the bound in Theorem~\ref{thm:mainsame} to some concrete settings.

\paragraph*{Operator norm:} We can compare Theorem~\ref{thm:mainsame} to known results in the setting of symmetric matrix-valued random variables under the $\ell_2 \rightarrow \ell_2$ operator norm.
Here, $k$ will be the square of the dimension $d$ of the matrix.
%Assume that $\mu$ is uniform over $[N]$.

We have the following tail bound and bound on the expected value of the operator norm of a random Gaussian sum of matrices due to~\cite{O10} (see also~\cite{T12}).
\begin{theorem}\label{thm:indmat}
Let $A_1, \ldots, A_n \in \R^{d \times d}$ be symmetric matrices, and let $g_1, \ldots, g_n$ be independent Gaussians with mean $0$ and standard deviation $1$.
Then
%\[
%\Pr\left[\left\|\sum_{i=1}^n g_iA_i\right\| \geq u\left\|\sum_{i=1}^n A_i^2 \right\| \right]
%\leq 
%k \exp(-u^2/2)
%\]
%and 
for some universal constant $C$,
\[
E\left[\left\|\sum_{i=1}^n g_iA_i\right\| \right]
\leq 
C\sqrt{ \left\|\sum_{i=1}^n A_i^2 \right\| \log(d)}.
\]
\end{theorem}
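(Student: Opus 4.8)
The plan is to use the standard matrix Laplace-transform method of Ahlswede--Winter~\cite{AW06} and Tropp~\cite{T12}. Write $Z = \sum_{i=1}^n g_i A_i$, a symmetric random matrix, and put $v = \left\|\sum_{i=1}^n A_i^2\right\|$. The first step is to reduce the operator norm to a trace exponential: since $\|Z\| = \lambda_{\max}(Z) \vee \lambda_{\max}(-Z)$ and $\exp(\theta\,\lambda_{\max}(M)) = \lambda_{\max}(\exp(\theta M)) \le \Tr\exp(\theta M)$ for symmetric $M$ and $\theta > 0$, we get $\exp(\theta\|Z\|) \le \Tr\exp(\theta Z) + \Tr\exp(-\theta Z)$. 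Because the $g_i$ are symmetric, $Z$ and $-Z$ are identically distributed, so taking expectations and applying Jensen's inequality on the left yields $\exp(\theta\,\E\|Z\|) \le 2\,\E\Tr\exp(\theta Z)$ for every $\theta > 0$.

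The second step is to bound the matrix moment generating function $\E\Tr\exp(\theta Z)$. Diagonalizing a fixed symmetric matrix $A$ shows that $\E_g\exp(\theta g A) = \exp(\theta^2 A^2/2)$ exactly, i.e.\ the matrix cumulant generating function of $gA$ is $\tfrac{\theta^2}{2}A^2$. Feeding this into the subadditivity of matrix cumulant generating functions --- the point at which Lieb's concavity inequality~\cite{L73} enters, exactly as in Tropp's argument --- gives
\[
\E\Tr\exp(\theta Z) \le \Tr\exp\left(\tfrac{\theta^2}{2}\textstyle\sum_{i=1}^n A_i^2\right) \le k\exp\left(\tfrac{\theta^2}{2}v\right),
\]
where the last inequality uses $\Tr\exp(H) \le k\exp(\lambda_{\max}(H)) = k\exp(\|H\|)$ for positive semidefinite $H$ (here $\sum_i A_i^2 \succeq 0$).

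Combining the two steps gives $\theta\,\E\|Z\| \le \log(2k) + \tfrac{\theta^2}{2}v$ for all $\theta > 0$; choosing $\theta = \sqrt{2\log(2k)/v}$ optimally yields $\E\|Z\| \le \sqrt{2v\log(2k)}$, which is the asserted inequality (with the variance proxy appearing under a square root, and $\log(2k) = O(\log k)$ for $k \ge 2$; the case $k = 1$ is immediate). I expect the only genuinely non-elementary ingredient to be the cumulant subadditivity step, which rests on Lieb's inequality; everything else is the Gaussian moment generating function computation and a one-variable optimization. If one prefers to avoid Lieb's inequality, one can instead peel off the Gaussians $g_1,\ldots,g_n$ one at a time using the Golden--Thompson inequality~\cite{G65,T65}, at the cost of slightly worse absolute constants; alternatively a Gaussian comparison or $\eps$-net argument on the unit sphere also works but gives a less clean dependence on $k$.
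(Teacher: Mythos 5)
Your argument is correct and is essentially the proof from Tropp~\cite{T12} that the paper itself points to: the paper states Theorem~\ref{thm:indmat} as a known result of Oliveira and Tropp and supplies no proof of its own, so there is no in-paper argument to diverge from. One point worth flagging: what you actually establish is $\E\left[\left\|\sum_i g_iA_i\right\|\right] \le \sqrt{2\log(2k)}\left\|\sum_i A_i^2\right\|^{1/2}$, i.e.\ the variance proxy enters through $\left\|\sum_i A_i^2\right\|^{1/2}$, whereas the displayed statement has $\left\|\sum_i A_i^2\right\|$ with no square root. The version you prove is the correct one --- the literal statement fails already for $n=1$ and $A_1=\eps I$ with $\eps$ small --- and it is also the version the paper uses immediately afterwards (the bound there appears as $\sqrt{\left\|\E_{\mu}[f_1(v)^2+\cdots+f_n(v)^2]\right\|\log(k)}$), so the missing exponent $1/2$ in the theorem statement is a typo rather than a gap in your proof.
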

Thus, the bound in Theorem~\ref{thm:mainsame} becomes
\[
\frac{C d^2}{1-\lambda}+\frac{C}{\sqrt{1-\lambda}} \sqrt{\left\|\E_{\mu}[f_1(v)^2+\cdots+f_n(v)^2]\right\|\log(d)}.
\]
We note that in Section~\ref{sec:sharp}, we explain that one can replace the $Cd^2$ term with $Cd$ without too much extra effort.

Neeman, Shi, and Ward~\cite{NSW24} obtain the bound
\[
\Pr\left[\left\|f_1(Y_1)+\cdots+f_n(Y_n)\right\| \geq u\right] \leq 2 d^{2-\pi/4} \exp\left(\frac{-u^2 \pi^2 /32}{\alpha(\lambda) \sigma^2 + \beta(\lambda) u}\right)
\]
where $\sigma^2 = \left\|\E_{\mu}[f_1(Y_1)^2]\right\|+\cdots+\left\|\E_{\mu}[f_n(Y_n)^2]\right\|$, $\lambda$ is the same as in Theorem~\ref{thm:mainsame} and
\[
\alpha(\lambda) = (1+\lambda)/(1-\lambda) \text{ and } \beta(\lambda) = \begin{cases} \frac{4}{3\pi} & \lambda = 0 \\ \frac{8/\pi}{1-\lambda} & 0 < \lambda < 1.\end{cases}
\]
Thus, when $n \geq Cd^4/((1-\lambda)\log(d))$, Theorem~\ref{thm:mainsame} can sometimes yield a sharper bound on the expected value.

In the case of independent random matrices, one has the bound due to~\cite{O09, T12}
\[
\Pr\left[\left\|f_1(Y_1)+\cdots+f_n(Y_n)\right\| \geq u\right] \leq 2 d  \exp\left(\frac{-u^2/2}{\sigma^2+u/3}\right)
\]
where this time, $\sigma^2 = \left\|\E_{\mu}[f_1(v)^2+\cdots+f_n(v)^2]\right\|$.
In particular, both the bound in Theorem~\ref{thm:mainsame} and known bounds for the independent case have the same $\sqrt{\left\|\E_{\mu}[f_1(v)^2+\cdots+f_n(v)^2]\right\|}$ term.

\paragraph*{Schatten norm:} We can also instantiate the bound in Theorem~\ref{thm:mainsame} to the setting of matrices under the Schatten-$p$ norm.
Given a $d \times d$ matrix $A$ with singular values $s_1(A), \ldots, s_d(A)$, the Schatten-$p$ norm of $A$, denoted as $S_p(A)$, is defined to by $(s_1(A)^p+\cdots+s_d(A)^p)^{1/p}$.
To our knowledge, there are no known results in this setting.

One has that $\rho_{S_p}(\tau) \leq \frac{p-1}{2}\tau^2$ from~\cite{TJ74,BCL94}.
Using the fact that the normalized sum of Rademacher random variables approaches that of a standard Gaussian in the limit and Theorem~\ref{thm:naor}, one has
\[
\E\left[\left\|\sum_{i=1}^{n} \sum_{v=1}^N g_v^{(i)} f_i(v)\right\|\right] 
\leq
\sqrt{\frac{p-1}{2}}\sqrt{\E_{\mu}[\|f_1(Y_1)\|^2]+\cdots+\E_{\mu}[\|f_n(Y_n)\|^2]},
\]
and Theorem~\ref{thm:mainsame} implies that
\[
\E\left[\|f_1(Y_1)+\cdots+f_n(Y_n)\|\right] \leq \frac{C d^2}{1-\lambda}+\frac{C}{\sqrt{1-\lambda}} \sqrt{\frac{p-1}{2}}\sqrt{\E_{\mu}[\|f_1(Y_1)\|^2]+\cdots+\E_{\mu}[\|f_n(Y_n)\|^2]}.
\]

\subsection{Future work}

Ideally, one would be able to obtain the bounds
\[
\E\left[\|f_1(Y_1)+\cdots+f_n(Y_n)\|\right] \leq \frac{C}{\sqrt{1-\lambda}} \E\left[\left\|\sum_{i=1}^{n} \sum_{v=1}^N g_v^{(i)} f_i(v)\right\|\right] 
\]
and
\[
\Pr\left[\|f_1(Y_1)+\cdots+f_n(Y_n)\| \geq u \E\left[\left\|\sum_{i=1}^{n} \sum_{v=1}^N g_v^{(i)} f_i(v)\right\|\right]  \right] \leq C_1 \exp\left(-C_2 (1-\lambda)u^2\right),
\]
in Theorems~\ref{thm:mainsame} and~\ref{cor:mstail} respectively, which we leave to future work.
It would also be interesting to develop a suitable bound for general state space Markov chains, and to obtain bounds that depend on only the second eigenvalue of the transition matrix.

\section{Preliminaries}

Given vectors $v, \mu \in \R^{N}$ such that $\mu$ has non-negative entries, we define
\[
\|v\|_{L_p(\mu)}^p = \sum_{i=1}^N |v_i|^p \mu_i.
\]
Additionally, we define $\|v\|_{\infty} = \max_{i} |v_i|$

We define the inner product for vectors $u, v \in \R^{N}$ and $\mu \in \R^N$ with positive entries as
\[
\langle u, v \rangle_{L_2(\mu)} = \sum_{i=1}^N u_i v_i \mu_i
\]

Given a matrix $M \in \R^{k \times k}$, we define the $L_2(\mu) \rightarrow L_2(\mu)$ operator norm to be
\[
\|M\|_{L_2(\mu) \rightarrow L_2(\mu)} = \sup_{x\st \|x\|_{L_2(\mu)} \leq 1} \| Mx \|_{L_2(\mu)}
\]

For simplicity, when $\mu$ is the all-$1$ vector, we will drop the $L_2(\mu)$ and refer to the inner product of $u$ and $v$ as just $\langle u, v \rangle$ and the operator norm of a matrix $M$ as just $\|M\|$. We will also refer to the norm of the vector $v$ as $\|v\|_2$.

For a stationary reversible Markov chain $\{Y_i\}_{i=1}^{\infty}$ with stationary distribution $\mu$, we define the averaging operator to be $E_{\mu} = \mathbf{1} \mu^*$ where $\mathbf{1}$ is the all-$1$ vector.
Let $A$ be the transition matrix of the Markov chain.
Note that $A$ and $E_{\mu}$ share the same top eigenvalue and eigenvector, and that $\left\|A-E_{\mu}\right\|_{L_2(\mu) \rightarrow L_2(\mu)} \leq 1$.

\subsection{Chaining}

We introduce various ideas from the theory of generic chaining that will be used in this note.
Let $T \subseteq \R^N$ be any set.
We first define the $\gamma_{\alpha}$ functional.
\begin{definition}
\[
\gamma_{\alpha}(T, d) = \inf \sup_{t \in T} \sum_{i=0}^{\infty} 2^{i/\alpha} \min_{t' \in T_i} d(t, t'),
\]
where the infimum is taken over all sequences of subsets $T_0 \subseteq T_1\subseteq \cdots \subseteq T$ such that $|T_0| = 1$ and $|T_i| \leq 2^{2^{i}}$ for $i \geq 1$.
\end{definition}

The following theorem, which appears as Theorem 4.5.13 in~\cite{T21}, allows one to obtain bounds on the supremum of a process that satisfies a certain increment condition.
We note that the tail bound can be found within the proof of Theorem 4.5.13 in~\cite{T21}.
\begin{theorem}[Theorem 4.5.13 in \cite{T21}]\label{thm:talbern}
Let $T$ be a set equipped with two distances $d_1$ and $d_2$, and let $(X_t)_{t \in T}$ be a centered process ($\E[X_t] = 0$ for all $t \in T$) which satisfies, for all $s, t \in T$ and all $u > 0$,
\[
\Pr\left[|X_s-X_t| \geq u\right] \leq 2 \exp\left(-\min \left(\frac{u^2}{d_2(s, t)^2}, \frac{u}{d_1(s, t)}\right)\right).
\]
Then,
\[
\E\left[\sup_{s, t \in T} |X_s - X_t|\right] \leq C_1 (\gamma_1(T, d_1)+\gamma_2(T, d_2)).
\]
and
\[
\Pr\left[\sup_{s, t \in T} |X_s - X_t| \geq  C_1 u^2 \gamma_1(T, d_1)+ C_1 u\gamma_2(T, d_2) \right] \leq C_2 \exp(-u^2)
\]
for some universal constants $C_1, C_2$.
\end{theorem}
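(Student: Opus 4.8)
The plan is to run the classical two‑metric generic chaining argument. First I would reduce the two‑sided bound to a one‑sided one: fix a base point $t_0 \in T$ and use $\sup_{s,t \in T} |X_s - X_t| \le 2\sup_{t \in T} |X_t - X_{t_0}|$, so it suffices to control $\E\sup_t |X_t - X_{t_0}|$ and the corresponding tail. It is harmless to assume $T$ finite, the general case following by the usual approximation by finite subsets. Next I would switch from admissible subsets to admissible partitions, the two formulations of $\gamma_\alpha$ agreeing up to universal constants: choose a partition sequence $\mathcal{B}_n$ that is near‑optimal for $\gamma_1(T,d_1)$ and a partition sequence $\mathcal{C}_n$ near‑optimal for $\gamma_2(T,d_2)$, set $\mathcal{A}_0 = \{T\}$, and let $\mathcal{A}_n$ be the common refinement of $\mathcal{B}_{n-1}$ and $\mathcal{C}_{n-1}$ for $n \ge 1$. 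Since $|\mathcal{B}_{n-1}|, |\mathcal{C}_{n-1}| \le 2^{2^{n-1}}$, we get $|\mathcal{A}_n| \le 2^{2^n}$, so $(\mathcal{A}_n)$ is admissible; and since $\mathcal{A}_n$ refines both sequences, each of its cells is small in $d_1$ (inherited from $\mathcal{B}_{n-1}$) and in $d_2$ (inherited from $\mathcal{C}_{n-1}$) at the same time. This common refinement is the one place where the two metrics get reconciled.

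Then I would chain. For each cell $A$ pick a representative $s_n(A) \in A$, put $\pi_n(t) = s_n(\mathcal{A}_n(t))$ with $\pi_0(t) = t_0$, and telescope $X_t - X_{t_0} = \sum_{n \ge 1} (X_{\pi_n(t)} - X_{\pi_{n-1}(t)})$. At level $n$ there are at most $|\mathcal{A}_n| \le 2^{2^n}$ possible links. For a link with $d_2$‑length $a$ and $d_1$‑length $b$, the increment hypothesis gives $\Pr[|X_{\pi_n(t)} - X_{\pi_{n-1}(t)}| \ge \sqrt{v_n}\,a + v_n\,b] \le 2\exp(-v_n)$; I would take $v_n = K(2^n + u^2)$ for a large universal constant $K$, so that a union bound over the at most $2^{2^n}$ links at level $n$, then over all $n \ge 1$, leaves total failure probability $\le C_2 e^{-u^2}$. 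On the complementary event, for each $t$ both $\pi_n(t)$ and $\pi_{n-1}(t)$ lie in the cell $\mathcal{A}_{n-1}(t)$, so by the triangle inequality the link's $d_j$‑length is at most $\diam_{d_j}(\mathcal{A}_{n-1}(t))$, and summing the thresholds yields $|X_t - X_{t_0}| \lesssim \sqrt{K}\sum_n (2^{n/2}+u)\diam_{d_2}(\mathcal{A}_{n-1}(t)) + K\sum_n (2^n+u^2)\diam_{d_1}(\mathcal{A}_{n-1}(t))$. Using $\sum_n 2^{n/2}\diam_{d_2}(\mathcal{A}_{n-1}(t)) \lesssim \gamma_2(T,d_2)$ together with $\sum_n \diam_{d_2}(\mathcal{A}_{n-1}(t)) \le \sum_n 2^{n/2}\diam_{d_2}(\mathcal{A}_{n-1}(t))$, and likewise for $d_1$ with weight $2^n$, this is $\lesssim (1+u^2)\gamma_1(T,d_1) + (1+u)\gamma_2(T,d_2)$, which for $u \ge 1$ is $\lesssim u^2\gamma_1(T,d_1) + u\gamma_2(T,d_2)$; for $u < 1$ the claimed tail is vacuous once $C_2$ is adjusted. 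This gives the tail statement after the factor‑$2$ reduction, and integrating $\E\sup_t |X_t - X_{t_0}| = \int_0^\infty \Pr[\sup_t |X_t - X_{t_0}| > x]\,dx$ — bounding the integrand by $1$ below $C_1(\gamma_1(T,d_1)+\gamma_2(T,d_2))$ and by $C_2 e^{-u(x)^2}$ above it — yields the expectation bound.

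The main obstacle is exactly the bookkeeping that keeps the two metrics from spoiling each other: a single chain built from the $d_1$‑optimal partition controls the $d_1$‑increments but leaves the $d_2$‑lengths of its links uncontrolled, and the mixed tail is then useless because its $u^2/d_2^2$ term collapses when $d_2$ is large (and symmetrically). Taking the common refinement of the two near‑optimal partition sequences — which costs only a one‑step index shift to keep cardinalities at $2^{2^n}$ — is precisely what makes one chain short in both metrics simultaneously. The remaining care is in matching exponents: the subexponential ($d_1$) part of the per‑level budget must scale like $v_n \asymp 2^n + u^2$ and the subgaussian ($d_2$) part like $\sqrt{v_n} \asymp 2^{n/2} + u$, which is what produces the asymmetric $u^2\gamma_1(T,d_1) + u\gamma_2(T,d_2)$ in the tail bound.
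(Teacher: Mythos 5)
The paper does not prove this statement itself---it is imported directly as Theorem 4.5.13 of Talagrand's book---so there is no internal proof to compare against; your argument is the standard two-metric generic chaining proof (reduce to a base point, take the common refinement of near-optimal admissible partitions for $d_1$ and $d_2$, allocate a per-level budget $v_n = K(2^n+u^2)$, and union bound over the at most $2^{2^n}$ links per level), which is essentially Talagrand's own proof and is correct, including the handling of the set-versus-partition formulations of $\gamma_\alpha$ and the integration of the tail to get the expectation bound.
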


The majorizing measures theorem, due to Talagrand~\cite{T87} (see also Theorem 2.10.1 in~\cite{T21})
gives tight bounds on the expected value of $\sup_{t \in T} \langle g, t \rangle$, where $g$ is a vector of independent Gaussians with mean $0$ and such that $g_i$ has variance $\mu_i$, in terms of $\gamma_2(T, L_2(\mu))$.
We state the theorem below.

\begin{theorem}[Talagrand's majorizing measures theorem]\label{thm:mm}
For some universal constant $C$, and for every symmetric $T \subseteq \R^n$,
\begin{equation*}\label{eq:mm}
\frac{1}{C}\gamma_2(T, L_2(\mu))\leq 
\mathbb{E}_{g_i \sim \mathcal{N}(0, \mu_i)}\left[\sup_{t \in T} \langle g, t \rangle\right]
\leq
{C}\gamma_2(T, L_2(\mu)).
\end{equation*}
\end{theorem}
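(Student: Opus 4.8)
The statement is the two-sided majorizing measures theorem, so the plan is to treat the upper and lower bounds separately: the upper bound is a short consequence of the generic chaining machinery already stated in Theorem~\ref{thm:talbern}, while the lower bound is the genuinely deep direction. The natural object is the Gaussian process $X_t = \langle g, t \rangle = \sum_i g_i t_i$. Since the $g_i \sim \mathcal{N}(0,\mu_i)$ are independent, its increments satisfy $X_s - X_t = \sum_i g_i (s_i - t_i) \sim \mathcal{N}(0, \|s-t\|_{L_2(\mu)}^2)$, so the canonical metric of the process is exactly $L_2(\mu)$. This is the conceptual heart of the statement: $L_2(\mu)$ is not an arbitrary choice but the metric induced by $g$.

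For the upper bound, the Gaussian tail gives $\Pr[|X_s - X_t| \geq u] \leq 2\exp(-u^2/(2\|s-t\|_{L_2(\mu)}^2))$, which is precisely the increment condition of Theorem~\ref{thm:talbern} with $d_1 \equiv 0$ (so that $\gamma_1(T, d_1) = 0$) and $d_2 = \sqrt{2}\,L_2(\mu)$. Hence $\E[\sup_{s,t} |X_s - X_t|] \leq C \gamma_2(T, L_2(\mu))$. Using that $T$ is symmetric, $\sup_{s,t}|X_s - X_t| = \sup_s X_s - \inf_t X_t = 2\sup_t X_t$ (reindexing $-t \mapsto t$), so $\E[\sup_t \langle g,t\rangle] = \tfrac12 \E[\sup_{s,t}|X_s - X_t|] \leq \tfrac{C}{2}\gamma_2(T, L_2(\mu))$, giving the right-hand inequality.

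For the lower bound I would set up the Talagrand functional $F(A) = \E[\sup_{t \in A} \langle g, t \rangle]$ for $A \subseteq T$ and establish a \emph{growth condition}: there is a universal $c > 0$ so that whenever $t_1, \ldots, t_m \in T$ are $a$-separated in $L_2(\mu)$ and each $H_j \subseteq T \cap B(t_j, a/4)$, then $F(\bigcup_j H_j) \geq c\,a\sqrt{\log m} + \min_j F(H_j)$. This follows from two classical Gaussian inputs. First, Sudakov minoration yields $\E[\max_j \langle g, t_j \rangle] \geq c'\, a\sqrt{\log m}$ for the $a$-separated anchors. Second, Gaussian concentration (Borell--TIS) applied to each $\sup_{t \in H_j}(X_t - X_{t_j})$ shows these suprema fluctuate on the scale of the radius $a/4$; a union bound over the $m$ clusters then guarantees that within the winning cluster the supremum exceeds $F(H_j)$ minus a lower-order term while the anchor contributes the $a\sqrt{\log m}$ gain, and absorbing constants produces the growth inequality.

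With the growth condition in hand, the bound $\gamma_2(T, L_2(\mu)) \leq C\,F(T)$ follows from the abstract partitioning theorem of generic chaining: a monotone functional obeying such a growth condition can be used to build inductively an admissible sequence $T_0 \subseteq T_1 \subseteq \cdots$ whose chaining sum is charged against the total decrement of $F$. At level $i$ one refines each current piece into sub-balls at the scale $2^{-i}$, applies the growth condition to trade the diameter contribution for a drop in $F$, and verifies the cardinality constraints $|T_i| \leq 2^{2^i}$. I expect this partitioning construction to be the main obstacle: the Sudakov and concentration inputs are standard, but the difficulty is entirely in organizing the recursion so that the functional decrement exactly pays for each chaining increment, ensuring the accumulated $\sqrt{\log m}$ gains telescope into the $\sum_i 2^{i/2}\cdot(\text{diam})$ weighting that defines $\gamma_2$. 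This is the full inductive argument of Talagrand (his Theorem 2.7.2 together with its specialization to Gaussian processes in Section 2.10 of~\cite{T21}), and it is the only part of the statement that does not reduce to the already-stated chaining and minoration facts.
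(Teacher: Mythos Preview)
The paper does not prove Theorem~\ref{thm:mm}; it is stated in the preliminaries as Talagrand's majorizing measures theorem with a citation to~\cite{T87} and to Theorem~2.10.1 of~\cite{T21}, and is used as a black box. So there is no ``paper's own proof'' to compare against.

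Your sketch is the standard route to the result and is essentially what one finds in the cited reference: the upper bound via the sub-Gaussian increment condition and generic chaining (your use of Theorem~\ref{thm:talbern} with $d_1\equiv 0$ is fine, interpreting $u/0=\infty$ so the minimum collapses to the Gaussian tail), and the lower bound via Sudakov minoration plus Borell--TIS concentration to verify a growth condition for the functional $A\mapsto \E[\sup_{t\in A}X_t]$, followed by the abstract partitioning scheme. Your identification of the partitioning recursion as the only genuinely nontrivial step is accurate. One minor point: in the symmetry step, the equality $\sup_{s,t}|X_s-X_t|=2\sup_t X_t$ holds almost surely precisely because $T$ is symmetric and $X_{-t}=-X_t$; you state this correctly.
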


% \begin{theorem}[Talagrand's majorizing measures theorem]\label{thm:mmlp}
% Let $y_1, \ldots, y_N$ be independent symmetric random variables satisfying $\Pr[|y_i| \geq t] = \exp(-t)$.
% Then, for some universal constant $C$, and for every symmetric $T \subseteq \R^N$,
% \begin{equation*}\label{eq:mm}
% \frac{1}{C}(\gamma_2(T, \ell_2)+\gamma_1(T, \ell_{\infty}))\leq 
% \mathbb{E}\left[\sup_{t \in T} \langle y, t \rangle\right]
% \leq
% {C}(\gamma_2(T, \ell_2)+\gamma_1(T, \ell_{\infty})).
% \end{equation*}
% \end{theorem}

\section{Proof of Theorems~\ref{thm:mainsame} and~\ref{cor:mstail}}

We start by recalling the following theorem due to Paulin~\cite{P15} which generalizes Bernstein's inequality to Markov chains.

\begin{theorem}[Theorem 3.3 in~\cite{P15}]\label{thm:ledoux}
Let $\{Y_i\}_{i=1}^{\infty}$ be a stationary reversible Markov chain with state space $[N]$, transition matrix $A$ and stationary measure $\mu$ so that $Y_1$ is distributed according to $\mu$.
Let $E_{\mu} = \mathbf{1} \mu^*$ and let $\lambda = \|A-E_{\mu}\|_{L_2(\mu) \rightarrow L_2(\mu)}$.

Let $f_1, \ldots, f_n: [N] \rightarrow [-M, M]$ be so that $\E[f(Y_i)] = 0$ for all $i$ and $\E[f(Y_1)^2]+\cdots+\E[f(Y_n)^2] = \sigma^2$.
Then for $u \geq 0$,
\begin{equation}\label{eq:ledoux}
\Pr\left[\left| \sum_{i=1}^{n} f_i(Y_i)\right| \geq u\right] \leq 2\exp\left(-\frac{u^2 (2(1-\lambda)-(1-\lambda)^2)}{8\sigma^2+20 u M}\right).
\end{equation}
\end{theorem}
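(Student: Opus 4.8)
The plan is to prove the bound by the classical exponential-moment (Chernoff) method, where the only Markov-chain-specific ingredient is a spectral estimate of the moment generating function that converts the scalar gap $1-\lambda$ into the Bernstein-type denominator. First I would reduce to the upper tail: since each $f_i$ takes values in the symmetric interval $[-M,M]$ and is centered, replacing $f_i$ by $-f_i$ turns the lower tail into an upper tail of the same form, so it suffices to bound $\Pr[S \ge u]$ with $S = \sum_{i=1}^n f_i(Y_i)$ and then double. For this I would apply Markov's inequality to $e^{\theta S}$, giving $\Pr[S \ge u] \le e^{-\theta u}\,\E[e^{\theta S}]$ for every $\theta > 0$, which leaves the task of bounding the moment generating function $\E[e^{\theta S}]$.

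The next step is to rewrite $\E[e^{\theta S}]$ as a quadratic form in a self-adjoint operator, exploiting reversibility. Writing $D_i = \diag(e^{\theta f_i(v)})_{v\in[N]}$ and unwinding the law of the chain gives $\E[e^{\theta S}] = \mu^* D_1 A D_2 A \cdots A D_n \mathbf{1}$. Conjugating by the isometry $\diag(\sqrt\mu)\colon L_2(\mu)\to\R^N$ replaces $A$ by the symmetric matrix $\tilde A = \diag(\sqrt\mu)\,A\,\diag(\sqrt\mu)^{-1}$ and $E_\mu$ by the rank-one orthogonal projection $\psi\psi^*$ onto $\psi = \sqrt\mu$; since the diagonal factors commute with $\diag(\sqrt\mu)$, a direct computation gives \[ \E[e^{\theta S}] = \langle \psi,\, D_1 \tilde A D_2 \tilde A \cdots \tilde A D_n\, \psi\rangle. \] Here $\tilde A$ is symmetric with $\tilde A \psi = \psi$, and by definition of $\lambda$ we may write $\tilde A = \psi\psi^* + R$ with $R\psi = 0$, $\psi^* R = 0$, and $\|R\| = \lambda$.

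The heart of the argument, and the step I expect to be the main obstacle, is a sharp bound on this quadratic form. The difficulty is that the diagonal multiplication operators $D_i$ do not commute with the spectral decomposition of $\tilde A$, so the $\psi$-component and its orthogonal complement mix at every step, and the crude submultiplicative bound $\prod_i \|D_i^{1/2}\tilde A D_{i+1}^{1/2}\|$ loses the gap. Instead I would track, as one sweeps the product, the decomposition of the running vector into its coordinate along $\psi$ and its orthogonal part, obtaining a two-term scalar recursion in which each application of $\tilde A$ contracts the orthogonal part by $\lambda$ while each $D_i$ injects mass governed by $\E[e^{\theta f_i}]$ and $\|D_i^{1/2}\| \le e^{\theta M/2}$. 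Using the centering $\E[f_i] = 0$, the bound $|f_i| \le M$, and the elementary estimate $e^{x} \le 1 + x + \tfrac{x^2/2}{1-|x|/3}$ on each diagonal entry (valid once $\theta M$ is below a fixed threshold), this recursion solves to a Bernstein-type bound of the shape $\log \E[e^{\theta S}] \le \frac{c_1\theta^2\sigma^2}{(1-\lambda^2)(1 - c_2\theta M)}$. It is precisely in controlling how $\lambda$ enters the contraction of the orthogonal component over the whole chain that the factor $1-\lambda^2 = 2(1-\lambda)-(1-\lambda)^2$ appears, and the careful bookkeeping of the mixing between the two components is what produces the explicit constants $8$ and $20$.

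Finally, I would optimize over $\theta$. Plugging the MGF bound into $\Pr[S\ge u] \le e^{-\theta u}\E[e^{\theta S}]$ and choosing $\theta$ of order $u/(\sigma^2 + uM)$, truncated so that $\theta M$ stays below the threshold, balances the quadratic (small-deviation) and linear (large-deviation) regimes and yields $\Pr[S\ge u] \le \exp\!\big(-\tfrac{u^2(1-\lambda^2)}{8\sigma^2+20uM}\big)$. Adding the matching bound for the lower tail contributes the factor $2$ and completes the proof. Aside from this spectral estimate of the moment generating function, the argument uses only Markov's inequality and elementary linear algebra, so it is self-contained relative to the earlier material.
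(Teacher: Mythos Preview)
The paper does not prove this statement: it is quoted verbatim as Theorem~3.3 of Paulin~\cite{P15} and used as a black box in the proof of Corollary~\ref{cor:same}. There is therefore no in-paper proof to compare your proposal against.

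That said, your outline is a reasonable sketch of the standard MGF/spectral route to Bernstein-type bounds for reversible chains (in the spirit of Gillman and Lezaud). The rewriting of $\E[e^{\theta S}]$ as $\langle \psi, D_1 \tilde A D_2 \cdots \tilde A D_n \psi\rangle$ is correct, and tracking the $\psi$-component versus its orthogonal complement is indeed how the gap $1-\lambda^2$ enters. The one place where your proposal is genuinely incomplete is the ``careful bookkeeping'' that you say produces the explicit constants $8$ and $20$: the two-term recursion you describe is easy to set up, but extracting a clean bound of the form $\exp\big(\tfrac{c_1\theta^2\sigma^2}{(1-\lambda^2)(1-c_2\theta M)}\big)$ with those specific constants, uniformly in the heterogeneous variances $\E[f_i^2]$, requires real work that you have not indicated how to do. If you actually need this inequality for the paper's purposes, citing~\cite{P15} suffices; if you want a self-contained proof, you would need to fill in that central estimate in detail.
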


% If we generalize to functions $f$ such that $f: [N] \rightarrow [-M, M]$, we obtain the bound
% \begin{equation}\label{eq:finallez}
% 2 e^{1/5}\exp\left(-\min\left\{\frac{u^2 (1-\lambda)}{4 \sigma^2}, \frac{u (1-\lambda)}{20 M}\right\}\right).
% \end{equation}

% Note that in Eq.~\eqref{eq:ledoux}, there is a factor of $2e^{(1-\lambda)/5}$ rather than $2$.
% Following the proof of Theorem~\ref{thm:talbern}, one can replace $C$ with $Ce^{(1-\lambda)/5}$ for our purposes.

% For $f, g \in \R^{N}$, define
% \[
% d_1(f, g) = \frac{20}{1-\lambda}\max_{v \in [N]} |f(v)-g(v)|
% \]
% and
% \[
% d_2(f, g)^2 = \frac{4n }{1-\lambda}\E_{\mu}\left[(f(v)-g(v))^2\right]
% \]

% Note that 
% \[
% d_1(f, g) =  O\left( \frac{\|f-g\|_{\infty}}{1-\lambda}\right)
% \text{ and }
% d_2(f, g) = O\left(\frac{\sqrt{n}\|f-g\|_{L_2(\mu)}}{\sqrt{1-\lambda}}\right).
% \]

% \begin{proof}
% Note that
% \[
% \frac{\gamma_1(T, \ell_{\infty})}{(1-\lambda)}+\frac{\sqrt{n} \gamma_2(T, L_2(\mu))}{\sqrt{1-\lambda}}
% \leq
% \frac{\sqrt{n}}{\sqrt{1-\lambda}}\left(\gamma_1(T, \ell_{\infty})+\gamma_2(T, L_2(\mu))\right)
% \]
% as long as $n \geq 1/(1-\lambda)$.
% \end{proof}

Combining Theorem~\ref{thm:talbern} and Theorem~\ref{thm:ledoux} yields the following corollary, which then gives a strategy for the proofs of Theorems~\ref{thm:mainsame} and~\ref{cor:mstail}.

\begin{corollary}\label{cor:same}
Let $\{Y_i\}_{i=1}^{\infty}$ be a stationary reversible Markov chain with state space $[N]$, transition matrix $A$ and stationary measure $\mu$ so that $Y_1$ is distributed according to $\mu$.
Let $\lambda = \|A-E_{\mu}\|_{L_2(\mu) \rightarrow L_2(\mu)}$.

Let $T \subseteq  \R^{n \times N}$ be any symmetric set of vectors containing $0$ such that $\E_{v \sim \mu}[t(i, v)] = 0$ for all $i$.
Let $W_t = \sum_{i=1}^{n} t(i, Y_i)$ for all $t \in T$.
Then,
\[
\E\left[\sup_{t \in T} |W_t|\right] \leq C_1 \left(\frac{\gamma_1(T, \ell_{\infty})}{1-\lambda}+\frac{\gamma_2(T, L_2((1, \ldots, 1) \otimes \mu))}{\sqrt{1-\lambda}}\right)
\]
and
\[
\Pr\left[\sup_{t \in T} |W_t| \geq u \right] \leq C_2 \exp\left(- C_1 (1-\lambda) \min\left\{\frac{u}{\gamma_1(T, \ell_{\infty})}, \frac{u^2}{\gamma_2(T, L_2((1, \ldots, 1) \otimes \mu))^2}\right\}\right)
\]
for some universal constant $C_1, C_2$.
\end{corollary}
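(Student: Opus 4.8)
The plan is to verify the hypotheses of Theorem~\ref{thm:talbern} for the process $(W_t)_{t \in T}$ with the two distances $d_1$ and $d_2$ defined just above the corollary, and then to translate the resulting bounds on $\gamma_1(T, d_1)$ and $\gamma_2(T, d_2)$ back into bounds involving $\gamma_1(T, \ell_\infty)$ and $\gamma_2(T, L_2((1,\ldots,1)\otimes\mu))$. First I would observe that each $W_t$ is centered: since $\E_{v\sim\mu}[t(i,v)] = 0$ for all $i$ and $Y_i$ is stationary with law $\mu$, we get $\E[W_t] = \sum_i \E[t(i,Y_i)] = 0$. Next, for fixed $s, t \in T$, apply Paulin's Bernstein inequality (Theorem~\ref{thm:ledoux}) to the functions $h_i(v) = s(i,v) - t(i,v)$; these have mean $0$ under $\mu$, are bounded by $M = \max_{i,v}|s(i,v)-t(i,v)| = \|s-t\|_\infty$, and have $\sigma^2 = \sum_i \E_\mu[(s(i,v)-t(i,v))^2]$. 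The simplified form of Eq.~\eqref{eq:ledoux} noted in the text gives exactly
\[
\Pr[|W_s - W_t| \geq u] \leq 2\exp\left(-\min\left\{\frac{u^2(1-\lambda)}{16\sigma^2}, \frac{u(1-\lambda)}{40M}\right\}\right)
= 2\exp\left(-\min\left\{\frac{u^2}{d_2(s,t)^2}, \frac{u}{d_1(s,t)}\right\}\right),
\]
which is precisely the increment condition required by Theorem~\ref{thm:talbern}.

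With the increment condition in hand, Theorem~\ref{thm:talbern} yields $\E[\sup_{s,t\in T}|W_s - W_t|] \leq C_1(\gamma_1(T,d_1) + \gamma_2(T,d_2))$ and the corresponding tail bound. To finish, I would use the homogeneity of the $\gamma_\alpha$ functionals in the metric: since $d_1 = \frac{40}{1-\lambda}\,\ell_\infty$ (restricted to $T$) and $d_2 = \sqrt{\tfrac{16}{1-\lambda}}\cdot \|\cdot\|_{L_2((1,\ldots,1)\otimes\mu)}$, and since $\gamma_\alpha(T, c\cdot d) = c\,\gamma_\alpha(T,d)$ for any constant $c > 0$ (the infimum in the definition of $\gamma_\alpha$ is over admissible sequences, which do not depend on the scaling of $d$), we get $\gamma_1(T,d_1) = O\!\left(\frac{1}{1-\lambda}\right)\gamma_1(T,\ell_\infty)$ and $\gamma_2(T,d_2) = O\!\left(\frac{1}{\sqrt{1-\lambda}}\right)\gamma_2(T, L_2((1,\ldots,1)\otimes\mu))$. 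Substituting these into the conclusion of Theorem~\ref{thm:talbern} gives the claimed bound on $\E[\sup_t |W_t|]$, after noting that $\sup_{t}|W_t| = \sup_t |W_t - W_0| \leq \sup_{s,t}|W_s - W_t|$ since $0 \in T$ and $W_0 = 0$. For the tail bound, I would substitute $u \mapsto u\sqrt{1-\lambda}/(\text{appropriate }\gamma)$-type rescalings into the tail statement of Theorem~\ref{thm:talbern}; writing the threshold there as $C_1 u^2 \gamma_1(T,d_1) + C_1 u \gamma_2(T,d_2)$ and solving for the value of $u^2$ that makes this equal to a given level gives the stated $\min\{u/\gamma_1, u^2/\gamma_2^2\}$ form with the $(1-\lambda)$ factor coming from the metric rescaling.

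The only genuinely delicate point is bookkeeping: making sure the two regimes (linear-in-$u$ versus quadratic-in-$u$) in the tail bound are matched correctly after the metric rescaling, and that the direction of the inequality $\sup_t|W_t|\le \sup_{s,t}|W_s-W_t|$ is used (rather than the reverse, which is false in general) — this is why symmetry of $T$ and $0\in T$ are assumed. Everything else is a routine substitution. I do not expect any essential obstacle; the corollary is essentially a clean repackaging of Theorem~\ref{thm:talbern} with the Markov-chain Bernstein inequality supplying the increment condition, and the factors of $1-\lambda$ tracked through the definitions of $d_1, d_2$.
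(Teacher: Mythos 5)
Your proposal is correct and follows exactly the route the paper takes: verify the Bernstein-type increment condition for $W_s - W_t$ via Theorem~\ref{thm:ledoux}, feed it into Theorem~\ref{thm:talbern} with the rescaled metrics $d_1, d_2$, and use $0 \in T$ to pass from $\sup_{s,t}|W_s - W_t|$ to $\sup_t |W_t|$. The paper's own proof is just a two-line citation of those same two theorems, so your write-up is essentially an expanded version of it with the bookkeeping made explicit.
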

\begin{proof}
Note that because $T$ contains $0$,
$\sup_{t \in T} |W_t-0| \leq \sup_{s, t \in T} |W_{s}-W_t|$.
By Theorem~\ref{thm:talbern}, it is enough to show that
\begin{equation}\label{eq:cor}
\Pr\left[|W_s-W_t| \geq u\right] \leq 2 \exp\left(-\min \left(\frac{u^2}{d_2(s, t)^2}, \frac{u}{d_1(s, t)}\right)\right).
\end{equation}
for
\[
d_1(s, t) =  O\left( \frac{\|s-t\|_{\infty}}{1-\lambda}\right)
\text{ and }
d_2(s, t) = O\left(\frac{\|s-t\|_{L_2((1, \ldots, 1) \otimes \mu)}}{\sqrt{1-\lambda}}\right).
\]
For $s, t \in \R^{n \times N}$, define
\[
d_1(s, t) = \frac{40}{1-\lambda}\max_{(i, v) \in [n] \times [N]} |s(i, v)-t(i, v)|
\]
and
\[
d_2(f, g)^2 = \frac{16}{1-\lambda}\sum_{i=1}^{n}\E_{\mu}\left[(s(i, v)-t(i, v))^2\right].
\]
Then Eq.~\eqref{eq:cor} follows from Theorem~\ref{thm:ledoux} and noting that the right-hand side of Eq.~\eqref{eq:ledoux} can be bounded above by
\[
2 \exp\left(-\min\left\{\frac{u^2(1-\lambda)}{16 \sigma^2}, \frac{u (1-\lambda)}{40 M}\right\}\right).
\]
\end{proof}

In particular, to bound $\E\left[\|f_1(Y_1)+\cdots+f_n(Y_n)\|\right]$ in Theorem~\ref{thm:mainsame}, it is enough to identify an appropriate set $T$ and bound $\gamma_1(T, \ell_{\infty})$ and $\gamma_2(T, L_2((1, \ldots, 1) \otimes \mu))$.
To bound $\gamma_2(T, L_2((1, \ldots, 1) \otimes \mu))$, one can apply Theorem~\ref{thm:mm}.
Finally, we prove Theorem~\ref{thm:mainsame}, by identifying a set $T$ that can be used to bound $\E\left[\|f_1(Y_1)+\cdots+f_n(Y_n)\|\right]$ and then bounding $\gamma_1(T, \ell_{\infty})$.

\begin{proof}[Proof of Theorems~\ref{thm:mainsame} and~\ref{cor:mstail}]
We start by constructing a set $T$ such that 
\[
\E\left[\|f_1(Y_1)+\cdots+f_n(Y_n)\|\right] = 
\E\left[\sup_{t \in T} |W_t|\right] 
\]
where $W_t = \sum_{i=1}^{n} t(i, Y_i)$.
This will allow us to apply Corollary~\ref{cor:same}.

Let $(X^*, \|\cdot\|_{*})$ be the dual space of $X$ with closed unit ball $B^*$.
Recall that
\begin{equation}\label{eq:norm}
\|x\| = \sup_{x^* \in B^*} |\langle x, x^*\rangle|.
\end{equation}
(see for instance, Theorem 4.3 in~\cite{R91}.)
For each $x^* \in B^*$, define the vector $t_{x^*}$ by $t_{x^*}(i, v) = \langle x^*, f_i(v) \rangle$ for all $i$ and $v$. Let $T$ be the set of all $t_{x^*}$ for all $x^* \in B^*$.
Then,
\[
\left\|f_1(Y_1)+\cdots+f_n(Y_n)\right\| = \sup_{x^* \in B^*} \langle x^*, f_1(Y_1)+\cdots+f_n(Y_n) \rangle = \sup_{t \in T} \sum_{i=1}^{n} t(i, Y_i).
\]

Note that because $\E[f_i(Y_i)] = 0$, it follows that $\E[t(i, Y_i)] = 0$ for all $i$.
The set $B^*$ is symmetric, and thus, for every $t_{x^*} \in T$, one has $-t_{x^*} = t_{-x^*} \in T$ and thus $T$ is symmetric.
Additionally, because $B^*$ contains $0$, so does $T$.
%Additionally $B^*$ is convex, and so for any $x_1^*, x_2^* \in B^*$ and $c \in [0, 1]$, one has $ct_{x_1^*}+(1-c)t_{x_2^*} = t_{cx_1^*+(1-2)x_2^*} \in T$.
Thus, Corollary~\ref{cor:same} applies here, and it is enough to bound $\gamma_2(T, L_2((1, \ldots, 1) \otimes \mu))$ and $\gamma_1(T, \ell_{\infty})$.

To bound $\gamma_2(T, L_2((1, \ldots, 1) \otimes \mu))$, we use Theorem~\ref{thm:mm}.
In particular, we have 
\[
\gamma_2(T, L_2((1, \ldots, 1) \otimes \mu))\leq 
C \mathbb{E}_{g_{j}^{(i)} \sim \mathcal{N}(0, \mu_j)}\left[\sup_{t \in T} \langle g, t \rangle\right]
=
C \E\left[\left\|\sum_{i=1}^{n}\sum_{v=1}^N g_{v}^{(i)} f_i(v)\right\|\right] 
\]
where the equality follows from the construction of $T$.

To bound $\gamma_1(T, \ell_{\infty})$, note that if $\left\|x^*_1-x^*_2\right\|_{*} \leq \eps$, then $\left\langle x^*_1-x^*_2, f_i(v) \right\rangle \leq \left\|x^*_1-x^*_2\right\|_{*} \|f_i(v)\|$ by Eq~\eqref{eq:norm}.
Thus, $\left\|t_{x^*_1}-t_{x^*_2}\right\|_{{\infty}} \leq \left\|x^*_1-x^*_2\right\|_{*}$

The above demonstrates that there is a correspondence of partitions of $T$ to partitions of the unit ball of $(X^*, \|\cdot\|_*)$.
Thus, we will consider partitions of the latter which will correspond to partitions of the former.

Let $T_i$ correspond to any maximal set $S$ of points from $B^*$ such that every pair of points are a distance at least $\eps$ from each other.
Because the set is maximal, every point in $B^*$ is at most $\eps$ away from some point in $S$.
On the other hand, because every pair of points in $S$ are a $\eps$ distance away from each other, the set of balls of radius $\eps/2$ centered at points in $S$ are disjoint and are contained inside a ball of radius $1+\eps/2$.
Thus, there are at most $(2/\eps+1)^k$ points, which is at most $2^{2^i}$ as long as $\eps \leq C 2^{-2^i/k}$ for some constant $C$.

Then for constants $C_1, C_2$, and $C_3$.
\begin{align*}
\gamma_{1}(T, \ell_{\infty})
&\leq
\gamma_{1}(B^*, *) \\
&=  \inf \sup_{x_1^* \in B^*} \sum_{i = 0}^{\infty} 2^{i} \min_{x_2^* \in B^*} \left\|t_{x^*_1}-t_{x^*_2}\right\|_{{\infty}}\\
&=  \inf \sup_{x_1* \in B^*} \sum_{i = 0}^{\infty} 2^{i} \min_{x_2^* \in B^*} \left\|x^*_1-x^*_2\right\|_{*}\\
&\leq C_1\sum_{i = 0}^{\infty} 2^{i} 2^{-2^i/k} \\
&\leq C_1\sum_{i = 0}^{\log(k)} 2^{i}+C_2 \sum_{i=\log(k)+1}^{\infty} 2^{i} 2^{-2^i/k} \\
&\leq C_3 k
\end{align*}
as desired.
\end{proof}

\subsection{Sharper versions of Theorem~\ref{thm:mainsame}}\label{sec:sharp}

The extra $Ck$ term in the bound of Theorem~\ref{thm:mainsame} is a result of the use of partitions of the dual space of the Banach space we are studying to bound $\gamma_{1}(T, \ell_{\infty})$.
In certain cases, one can replace the dual space with some other lower-dimensional space to obtain a slightly sharper estimate.
We do this for matrices under the operator norm, and for vectors under the $\ell_{\infty}$ norm.

We stress that in both cases, the sharper bound we obtain does not match what is known from~\cite{GLSS18} and~\cite{NSW24} in the case of matrices under the operator norm, or from the application of the union bound to say Theorem~\ref{thm:ledoux} in the case of vectors under the $\ell_{\infty}$ norm.
In both cases, the $Ck$ term can be removed, or equivalently replaced by a $C \sqrt{\log(k)}$ term.
The goal of this section is not to prove optimal bounds, but rather, to demonstrate how it might be possible to improve on the bounds in Theorem~\ref{thm:mainsame} when considering a specific Banach space.

We start with the sharper bound for matrices under the operator norm, using the fact that the unit ball of the dual space is the set of convex combinations of a set of rank-$1$ matrices.

\begin{theorem}\label{thm:intdiff}
When the Banach space in Theorem~\ref{thm:mainsame} is the set of $d \times d$ matrices equipped with the $\ell_2 \rightarrow \ell_2$ operator norm, for some constant $C$,
\[
\E\left[\|f_1(Y_1)+\cdots+f_n(Y_n)\|\right] \leq \frac{C d}{1-\lambda}+\frac{C}{\sqrt{1-\lambda}} \E\left[\left\|\sum_{i=1}^{n} \sum_{v=1}^N g_v^{(i)} f_i(v)\right\|\right] 
\]
\end{theorem}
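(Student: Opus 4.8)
The plan is to re-run the proof of Theorem~\ref{thm:mainsame} verbatim, but to replace the set $T$ built from the whole dual ball by the much smaller set coming from the rank-one description of the nuclear-norm ball. Concretely, for $u,v\in\R^d$ with $\|u\|_2\le 1$ and $\|v\|_2\le 1$, I would set $t_{u,v}(i,w)=u^* f_i(w)v=\langle uv^*,f_i(w)\rangle$ and take $T=\{t_{u,v}:\|u\|_2\le 1,\ \|v\|_2\le 1\}$. Since $t_{-u,v}=-t_{u,v}$ and $t_{0,0}=0$, this $T$ is symmetric and contains $0$, and because $\E_{w\sim\mu}[f_i(w)]=0$ we get $\E_{w\sim\mu}[t_{u,v}(i,w)]=u^*\E_\mu[f_i(w)]v=0$ for all $i$, so the hypotheses of Corollary~\ref{cor:same} are met. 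Using $\|M\|=\sup_{\|u\|_2\le1,\|v\|_2\le1}|u^*Mv|$, one has $\sup_{t\in T}|W_t|=\|f_1(Y_1)+\cdots+f_n(Y_n)\|$ and, applying the same identity to the Gaussian matrix, $\sup_{t\in T}\langle g,t\rangle=\bigl\|\sum_{i,v}g_v^{(i)}f_i(v)\bigr\|$; hence Theorem~\ref{thm:mm} still yields $\gamma_2\bigl(T,L_2((1,\ldots,1)\otimes\mu)\bigr)\le C\,\E\bigl[\bigl\|\sum_{i,v}g_v^{(i)}f_i(v)\bigr\|\bigr]$ exactly as in the proof of Theorem~\ref{thm:mainsame}.

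The new content is the bound on $\gamma_1(T,\ell_\infty)$, which should now come out as $O(d)$ rather than $O(d^2)=O(k)$ because the relevant covering is of a Euclidean ball in $\R^d$ (really of $B_2^d\times B_2^d$) instead of the $d^2$-dimensional nuclear-norm ball. Using $\|u\|_2,\|v\|_2\le 1$ and $\|f_i(w)\|\le 1$, for every $i,w$,
\[
|t_{u,v}(i,w)-t_{u',v'}(i,w)|=\bigl|u^*f_i(w)(v-v')+(u-u')^*f_i(w)v'\bigr|\le\|v-v'\|_2+\|u-u'\|_2,
\]
so the parametrization $(u,v)\mapsto t_{u,v}$ is $1$-Lipschitz from $D:=B_2^d\times B_2^d$ with the metric $\rho((u,v),(u',v'))=\|u-u'\|_2+\|v-v'\|_2$ onto $(T,\ell_\infty)$. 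Pushing any admissible sequence of $(D,\rho)$ forward through this map (cardinalities do not increase) gives $\gamma_1(T,\ell_\infty)\le\gamma_1(D,\rho)$.

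Finally, I would bound $\gamma_1(D,\rho)$ by the same geometric-series estimate used for $\gamma_1(B^*,*)$ in the proof of Theorem~\ref{thm:mainsame}, with the exponent $k=d^2$ replaced by $2d$: a product of $\eps/2$-nets of the two Euclidean balls gives an $\eps$-net of $(D,\rho)$ of size at most $(C/\eps)^{2d}$, which is at most $2^{2^i}$ once $\eps\le C'2^{-2^i/(2d)}$; then $\sum_i 2^i\min_{D_i}\rho\le\sum_i 2^i\eps_i$, and splitting at $i\approx\log_2 d$ (using $\diam_\rho(D)\le 4$ for the initial terms and the doubly-exponential decay afterwards) yields $\gamma_1(D,\rho)=O(d)$. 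Plugging $\gamma_1(T,\ell_\infty)=O(d)$ and the bound on $\gamma_2$ into Corollary~\ref{cor:same} gives
\[
\E\left[\|f_1(Y_1)+\cdots+f_n(Y_n)\|\right]=\E\left[\sup_{t\in T}|W_t|\right]\le\frac{Cd}{1-\lambda}+\frac{C}{\sqrt{1-\lambda}}\,\E\left[\left\|\sum_{i=1}^n\sum_{v=1}^N g_v^{(i)}f_i(v)\right\|\right],
\]
which is the claim. There is no deep obstacle here beyond the proof of Theorem~\ref{thm:mainsame}; the one thing to check with care is the Lipschitz estimate displayed above, since it is precisely what allows the low-dimensional covering number of $B_2^d\times B_2^d$ to replace that of the full $d^2$-dimensional dual ball in the $\gamma_1$ bound. (For symmetric matrices one may instead parametrize by a single ball via $v\mapsto vv^*$ and obtain $\gamma_1(T,\ell_\infty)=O(d)$ with the exponent $d$, but this does not change the bound asymptotically.)
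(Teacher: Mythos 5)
Your proposal is correct and follows essentially the same route as the paper: the same rank-one parametrization $t_{u,v}(i,w)=\langle u, f_i(w)v\rangle$ over $B_2^d\times B_2^d$, the same Lipschitz estimate $\|t_{u,v}-t_{u',v'}\|_\infty\le\|u-u'\|_2+\|v-v'\|_2$, the same use of Corollary~\ref{cor:same} and Theorem~\ref{thm:mm} for the $\gamma_2$ term, and the same covering-number computation (now in dimension $2d$) for $\gamma_1(T,\ell_\infty)=O(d)$. You in fact spell out the $\gamma_1$ estimate in slightly more detail than the paper, which simply defers to the end of the proof of Theorem~\ref{thm:mainsame}.
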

\begin{proof}
As in the proof of Theorem~\ref{thm:mainsame}, we construct a set $T$ such that 
\[
\E\left[\|f_1(Y_1)+\cdots+f_n(Y_n)\|\right] = 
\E\left[\sup_{t \in T} |W_t|\right] 
\]
where $W_t = \sum_{i=1}^{n} t(i, Y_i)$.
This will allow us to apply Corollary~\ref{cor:same}.

Let $B^{d} \subseteq \R^{k}$ be the unit ball under the $\ell_2$ norm.
For each $x, y \in S^{k-1}$, define the vector $t_{x,y}(i, v)$ by $t_{x, y}(i, v) = \left\langle x, f_i(v) y \right\rangle$, and let $T$ be the set of all $t_{x, y}$ for all $x, y \in B^{d}$.
Then,
\[
\left\|\sum_{i=1}^{n}f_i(Y_{i})\right\|  =
\sup_{x, y \in B^{d}} \left\langle x, \left(\sum_{i=1}^{n}f_i(Y_{i})\right) y \right\rangle =
\sup_{t \in T} \sum_{i=1}^{n} t(i, Y_{i}).
\]

Similarly as in the proof of Theorems~\ref{thm:mainsame} and~\ref{cor:mstail}, note that $B^{d}$ is symmetric and contains $0$, and thus $T$ is symmetric and contains $0$.
Additionally, $\E_{v \sim \mu}[t(i, v)] = 0$ for all $i$ and $t \in T$ as $\E_{v \sim \mu}[f_i(v)] = 0$ for all $i$.
Thus, Corollary~\ref{cor:same} applies here  and it is enough to bound $\gamma_2(T, L_2((1, \ldots, 1) \otimes \mu))$ and $\gamma_1(T, \ell_{\infty})$.

As before, to bound $\gamma_2(T, L_2((1, \ldots, 1) \otimes \mu))$, we use Theorem~\ref{thm:mm}.
In particular, we have 
\[
\gamma_2(T, L_2((1, \ldots, 1) \otimes \mu))\leq 
C \mathbb{E}_{g_{j}^{(i)} \sim \mathcal{N}(0, \mu_j)}\left[\sup_{t \in T} \langle g, t \rangle\right]
=
C \E\left[\left\|\sum_{i=1}^{n}\sum_{v=1}^N g_{v}^{(i)} f_i(v)\right\|\right] 
\]

To bound $\gamma_1(T, \ell_{\infty})$, note that 
\begin{align*}
\left|\langle x, f(v) y \rangle - \langle x^*, f(v) y^* \rangle\right|
&=
\left|\langle x, f(v) (y-y^*) \rangle +\langle (x-x^*), f(v) y^* \rangle\right| \\
&\leq \left|\langle x, f(v) (y-y^*) \rangle\right| +\left|\langle (x-x^*), f(v) y^* \rangle\right| \\
&\leq
\|x-x^*\|_{2}+\|y-y^*\|_{2}
\end{align*}
where the last inequality follows from the Cauchy-Schwarz inequality and the fact that $\|f(v)\| \leq 1$.
Thus, $\|t_{x, y} - t_{x^*, y^*}\|_{\infty} \leq \|x-x^*\|_{2}+\|y-y^*\|_{2}$.

Finally, one obtains the bound $\gamma_1(T, \ell_{\infty}) \leq C d$ using the same proof as used in the end of the proof of Theorem~\ref{thm:mainsame}.
In particular, there is a correspondence between partitions of $T$ and partitions of the set $B^{d} \times B^{d}$.
As in Theorem~\ref{thm:mainsame}, one can use the geometry of the latter to bound $\gamma_1(T, \ell_{\infty})$
\end{proof}

The following is a sharper bound of vectors under the $\ell_{\infty}$ norm, which uses the fact that the dual space is the set of convex combinations of a finite set of points.

\begin{theorem}\label{thm:intdiff}
When the Banach space in Theorem~\ref{thm:mainsame} is the set of $k$-dimensional vectors with the $\ell_{\infty}$ norm, for some constant $C$,
\[
\E\left[\|f_1(Y_1)+\cdots+f_n(Y_n)\|\right] \leq \frac{C \log(k)}{1-\lambda}+\frac{C}{\sqrt{1-\lambda}} \E\left[\left\|\sum_{i=1}^{n} \sum_{v=1}^N g_v^{(i)} f_i(v)\right\|\right] 
\]
\end{theorem}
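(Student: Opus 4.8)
The plan is to follow the template used for Theorem~\ref{thm:mainsame} and its matrix refinement, exploiting the fact that the dual of $(\R^k,\|\cdot\|_\infty)$ is $(\R^k,\|\cdot\|_1)$, whose unit ball is the cross-polytope, i.e.\ the convex hull of $\{\pm e_1,\dots,\pm e_k\}$. Since a linear functional on a polytope attains its maximum at a vertex, the relevant index set $T$ can be taken to consist of only $2k$ vectors rather than a net of the whole dual ball, and it is exactly this that upgrades the bound on $\gamma_1$ from $O(k)$ to $O(\log k)$.

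Concretely, first I would set $T = \{0\}\cup\{t_{j,\sigma} : j\in[k],\ \sigma\in\{\pm1\}\}$, where $t_{j,\sigma}(i,v) = \sigma\, f_i(v)_j$ is the signed $j$-th coordinate of $f_i(v)$. Then, writing $W_t = \sum_{i=1}^n t(i,Y_i)$, one has $\sup_{t\in T} W_t = \max_{j\in[k]} |\sum_{i=1}^n f_i(Y_i)_j| = \|f_1(Y_1)+\cdots+f_n(Y_n)\|_\infty$, and $T$ is symmetric, contains $0$, and satisfies $\E_{v\sim\mu}[t(i,v)] = 0$ for every $t\in T$ because each $f_i$ has mean $0$. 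Adjoining $0$ changes neither the symmetry nor the value of $\sup_{t\in T}|W_t|$ (the latter is already nonnegative by symmetry of $T$), so Corollary~\ref{cor:same} applies, and it remains to bound $\gamma_2(T,L_2((1,\ldots,1)\otimes\mu))$ and $\gamma_1(T,\ell_\infty)$.

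For the $\gamma_2$ term I would invoke Theorem~\ref{thm:mm}: $\gamma_2(T,L_2((1,\ldots,1)\otimes\mu)) \le C\,\E[\sup_{t\in T}\langle g,t\rangle]$, and by the same identity as above $\sup_{t\in T}\langle g,t\rangle = \max_j |\sum_{i,v} g_v^{(i)} f_i(v)_j| = \|\sum_{i,v} g_v^{(i)} f_i(v)\|_\infty$, which supplies the second term of the claimed bound. For $\gamma_1$ I would use the elementary fact that a finite set $T\ni 0$ of cardinality $m$ with $\max_{t\in T} d(t,0)\le D$ satisfies $\gamma_1(T,d)\le C D\log m$: take the admissible chain with $T_i = \{0\}$ for $i < i_0$ and $T_i = T$ for $i\ge i_0$, where $i_0$ is least with $2^{2^{i_0}}\ge m$, so that $2^{i_0}\le 2\log_2 m$ and $\sum_{i<i_0} 2^i\min_{t'\in T_i} d(t,t') \le (2^{i_0}-1)D$, while the tail terms vanish. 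Here $m = 2k+1$ and each $t\in T$ has $\|t\|_\infty \le \max_{i,v}\|f_i(v)\| \le 1$, so $\gamma_1(T,\ell_\infty) = O(\log k)$. Substituting both estimates into Corollary~\ref{cor:same} gives the theorem.

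The calculations are all routine; the only real content — and hence the only place that needs care — is the initial decision to index by the finitely many vertices of the $\ell_1$-ball instead of by a net of it, which prevents the ambient dimension $k$ from re-entering through $\gamma_1$. Everything else (checking the mean-zero, symmetry, and ``contains $0$'' hypotheses of Corollary~\ref{cor:same}, and the two-line finite-set estimate for $\gamma_1$) is bookkeeping.
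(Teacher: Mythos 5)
Your proposal is correct and follows essentially the same route as the paper: the same index set (the $2k+1$ signed coordinate functionals plus $0$), the same reduction via Corollary~\ref{cor:same}, the same use of Theorem~\ref{thm:mm} for the $\gamma_2$ term, and the same two-stage admissible sequence giving $\gamma_1(T,\ell_\infty)=O(\log k)$. No substantive differences to report.
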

\begin{proof}
As in the proof of Theorem~\ref{thm:mainsame}, we construct a set $T$ such that 
\[
\E\left[\|f_1(Y_1)+\cdots+f_n(Y_n)\|\right] = 
\E\left[\sup_{t \in T} |W_t|\right] 
\]
where $W_t = \sum_{i=1}^{n} t(i, Y_i)$.
This will allow us to apply Corollary~\ref{cor:same}.

Let $V$ be the set of vectors with exactly one non-zero coordinate that is either $1$ or $-1$, along with the vector $0$.
Note that $V$ contains $2k+1$ vectors.
For each $x \in V$, define $t_{x}(i, v) = \langle x, f_i(v) \rangle$, and let $T$ be the set of all $t_{x}$ for all $x \in V$.
Then,
\[
\left\|\sum_{i=1}^{n}f_i(Y_{i})\right\|  =
\sup_{x \in V} \left\langle x, \left(\sum_{i=1}^{n}f_i(Y_{i})\right) \right\rangle =
\sup_{t \in T} \sum_{i=1}^{n} t(i, Y_{i}).
\]

Note that $\E_{v \sim \mu}[t(i, v)] = 0$ for all $i$ and $t \in T$ as $\E_{v \sim \mu}[f_i(v)] = 0$ for all $i$.
Additionally, $V$ is symmetric and contains $0$, and thus $T$ is symmetric and contains $0$.
Thus, Corollary~\ref{cor:same} applies here, and it is enough to bound $\gamma_2(T, L_2((1, \ldots, 1) \otimes \mu))$ and $\gamma_1(T, \ell_{\infty})$.

As before, to bound $\gamma_2(T, L_2((1, \ldots, 1) \otimes \mu))$, we use Theorem~\ref{thm:mm}.
In particular, we have 
\[
\gamma_2(T, L_2((1, \ldots, 1) \otimes \mu))\leq 
C \mathbb{E}_{g_{j}^{(i)} \sim \mathcal{N}(0, \mu_j)}\left[\sup_{t \in T} \langle g, t \rangle\right]
=
C \E\left[\left\|\sum_{i=1}^{n}\sum_{v=1}^N g_{v}^{(i)} f_i(v)\right\|\right] 
\]

To bound $\gamma_1(T, \ell_{\infty})$, one can use the following sequence of subsets: $T_i = {0}$ if $i \leq \log\log(2k+1)$, and $T_i = T$ otherwise.
Note that $\|t-0\|_{\infty} \leq 1$ for all $t \in T$ by construction.
Then,
\[
\gamma_1(T, \ell_{\infty}) \leq \sum_{i=0}^{\log\log(2k+1)} 2^i \leq C\log(k).
\]
\end{proof}

\bibliographystyle{alphaabbrv}
\bibliography{expanderhoeffding}
\end{document}